\def \[{\begin{equation}}
	\def \]{\end{equation}}
\newtheorem{thm}{Theorem}[section]
\newtheorem{lem}{Lemma}[section]
\newtheorem{exam}{Example}[section]
\numberwithin{equation}{section}
\title{\bf A monotone block coordinate descent method for solving absolute value equations}
\author[a]{Tingting Luo\thanks{Email address: 610592494@qq.com.}}
\author[a]{Jiayu Liu\thanks{Email address: 1977078576@qq.com.}}
\author[a]{Cairong Chen\thanks{Supported in part by the Fujian Alliance of Mathematics (2023SXLMQN03). Email address: cairongchen@fjnu.edu.cn.}}
\author[b]{Qun Wang\thanks{Corresponding author. Supported in part by the National Natural Science Foundation of China (11801502) and the Zhejiang Provincial Philosophy and Social Sciences Planning Project (24NDJC113YB). Email address:  wangqun876@gmail.com.}}
\affil[a]{School of Mathematics and Statistics \& Key Laboratory of Analytical Mathematics and Applications (Ministry of Education) \& Fujian Key Laboratory of Analytical Mathematics and Applications (FJKLAMA) \& Center for Applied Mathematics of Fujian Province (FJNU), Fujian Normal University, Fuzhou, 350117,  P.R. China.}
\affil[b]{School of Data Sciences, Zhejiang University of Finance and
Economics, Hangzhou, 310018, P.R. China.}
\begin{document}
\date{\today}
\maketitle

\begin{quote}
{\bf Abstract:} In this paper, we proposed a monotone block coordinate descent method for solving absolute value equation (AVE). Under appropriate conditions, we analyzed the global convergence of the algorithm and conduct numerical experiments to demonstrate its feasibility and effectiveness.

{\bf Keyword:} Absolute value equation; Block coordinate descent method; Convergence.
\end{quote}

\section{Introduction}\label{sec:intro}

We consider the system of absolute value equations (AVE) of the type
\begin{equation}\label{eq:ave}
Ax - \vert x \vert = b,
\end{equation}
where~$A\in\mathbb{R}^{n\times n}$ and $b\in\mathbb{R}^n$ are known, and~$x\in\mathbb{R}^n$ is unknown. Here, $\vert x\vert=[\vert x_1\vert,\vert x_2\vert,\cdots,\vert x_n\vert]^\top$. Obviously, AVE~\eqref{eq:ave} is a special case of the following generalized absolute value equations (GAVE)
\begin{equation}\label{eq:gave}
Ax + B\vert x \vert = b,
\end{equation}
where~$B \in \mathbb{R}^{n\times n}$ is given. To the best of our knowledge, GAVE~\eqref{eq:gave} was first introduced in~\cite{rohn2004}. As demonstrated in~\cite{mang2007}, solving the general GAVE~\eqref{eq:gave} is NP-hard. Furthermore, if it is solvable, checking whether GAVE~\eqref{eq:gave} has a unique solution or multiple solutions is NP-complete~\cite{prok2009}.

AVE \eqref{eq:ave} and GAVE \eqref{eq:gave} are significant mathematical models with practical applications in engineering and information security. For instance, a transform function based on GAVE \eqref{eq:gave} has been used to effectively improve the security of cancellable biometric systems \cite{dnhl2023}. Meanwhile, AVE \eqref{eq:ave} and GAVE \eqref{eq:gave} are equivalent to the linear complementarity problem \cite{cops1992,huhu2010,mang2007,mame2006,prok2009}, and are closely related to the solution of linear interval equations \cite{rohn1989}. They have garnered increasing attention in the fields of numerical optimization and numerical algebra in recent years.

There are numerous results on both the theoretical and numerical aspects of AVE \eqref{eq:ave} and GAVE \eqref{eq:gave}. On the theoretical side, conditions for the existence, nonexistence, and uniqueness of solutions to AVE \eqref{eq:ave} and GAVE \eqref{eq:gave} have been reported; see, e.g., \cite{hlad2018,mang2007,mame2006,mezz2020,rohn2004,rohn2009,rohf2014,wuli2018,wush2021} and references therein. On the numerical side, the discrete iterative methods \cite{chyh2024,chyh2023,doss2020,edhs2017,ghlw2017,keyf2020,kema2017,mang2009,nika2011,rohf2014,wacc2019,yuch2022,yuly2021,zhwl2021} and continuous dynamic models \cite{cyyh2021,gawa2014,jyfc2023,lyyh2023,maer2018,sanc2019} for solving AVE \eqref{eq:ave} and GAVE \eqref{eq:gave} have been studied. Notably, Noor, Iqbal, Khattri and AI-Said proposed an iterative method for solving AVE \eqref{eq:ave} based on the minimization technique \cite{nika2011}, which can be viewed as a modification of the Gauss-Seidel approach. Specifically, the solution to AVE \eqref{eq:ave} is reformulated by finding the minimum point of an unconstrained optimization problem. Subsequently, a minimization technique is used to search along each block coordinate direction to solve the corresponding unconstrained optimization problem. However, it is important to note that the objective function of the constructed unconstrained optimization problem is not globally quadratic differentiable. Consequently, the approach in \cite{nika2011} misused the second-order Taylor expansion of the objective function to determine the step sizes, which might make the method being non-monotonic descent and the rigorously  global convergence analysis is lack in \cite{nika2011}. Therefore, after fully exploring the intrinsic properties of the objective function proposed in \cite{nika2011}, this paper develops a monotone block coordinate descent method with guaranteed convergence to solve the AVE \eqref{eq:ave}.

The rest of this paper is organized as follows. In Section \ref{sec:Pre}, we present some lemmas which are useful for our later developments. In Section \ref{sec:analysis}, a monotone block coordinate descent algorithm for solving AVE \eqref{eq:ave} is proposed and its convergence is analyzed. The numerical experiments and the conclusion of this paper are given in Section \ref{sec:Num} and Section \ref{sec:conclusion}, respectively.

\textbf{Notation.}
Let $\mathbb{R}^{m\times n}$ be the set of all $m\times n$ real matrices and $\mathbb{R}^m=\mathbb{R}^{m\times 1}$. $I$ denotes the identity
matrix with suitable dimensions, $I(:,i)$ represents the $i$ column of the identity matrix $I$. For the vector $x\in\mathbb{R}^n$, $x_i$ represents its $i$th element. For vectors $x,y\in\mathbb{R}^n$, $\langle x,y\rangle=x^\top y$, where $\cdot ^\top$ denotes the transpose operation. $A=(a_{ij})\in\mathbb{R}^{m\times n}$ indicates that the element in the row $i$ and the column $j$ of $A$ is $a_{ij}$. For $x\in\mathbb{R}$, the symbolic function is defined as
\begin{equation*}
{\rm sign}(x)=
\begin{cases}
1,~~\text{if}~x>0;\\
0,~~\text{if}~x=0;\\
-1,~~\text{if}~x<0.
\end{cases}
\end{equation*}
For $x\in\mathbb{R}^n$, ${\rm sign}(x)$ represents the vector formed by taking the sign function value of the corresponding element, $\mathcal{D}(x)={\rm diag}({\rm sign}(x))$ denotes a diagonal matrix with a sign function vector as its diagonal elements.

\section{Preliminaries}\label{sec:Pre}
In this section, we provide the preparatory knowledge required for the later development.

Let $A\in\mathbb{R}^{n\times n}$ be a symmetric matrix and $b\in\mathbb{R}^n$. Define
\begin{equation}\label{eq:ff}
f(x)=\langle Ax,x\rangle - \langle |x|,x\rangle - 2\langle b,x\rangle,\quad x\in\mathbb{R}^n.
\end{equation}
As shown in \cite{nika2011}, the function $f$ defined as in \eqref{eq:ff} is continuously differentiable and
\begin{equation}\label{eq:diff}
\nabla f(x)=2(Ax-|x|-b).
\end{equation}

In the following, we will explore properties of the function $f$ defined as in \eqref{eq:ff}.

\begin{lem}\label{lem:1}
The function $\nabla f$ defined as in \eqref{eq:diff} is Lipschitz continuous on $\mathbb{R}^n$ with the Lipschitz constant $2(\|A\|+1)$.
\end{lem}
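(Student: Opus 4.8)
The plan is to estimate $\|\nabla f(x) - \nabla f(y)\|$ directly from the explicit gradient formula \eqref{eq:diff}, using the triangle inequality to separate the linear part from the absolute-value part. Forming the difference, the constant term $b$ cancels and the factor $2$ pulls out, giving
\[
\nabla f(x) - \nabla f(y) = 2\bigl(A(x-y) - (|x| - |y|)\bigr),
\]
so that $\|\nabla f(x) - \nabla f(y)\| \le 2\bigl(\|A(x-y)\| + \||x| - |y|\|\bigr)$.

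For the linear term I would use compatibility of the operator norm $\|A\|$ with the underlying vector norm to obtain $\|A(x-y)\| \le \|A\|\,\|x-y\|$. The only substantive point is the absolute-value term, where I would establish that the map $x \mapsto |x|$ is nonexpansive, namely $\||x| - |y|\| \le \|x-y\|$. This follows from the scalar reverse triangle inequality $\bigl||x_i| - |y_i|\bigr| \le |x_i - y_i|$ applied in each coordinate, and then summing the squares over $i=1,\dots,n$.

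Putting the two bounds together yields $\|\nabla f(x) - \nabla f(y)\| \le 2(\|A\| + 1)\,\|x-y\|$ for all $x,y \in \mathbb{R}^n$, which is exactly the asserted Lipschitz estimate. I do not expect any real obstacle in this argument; the one step worth isolating as a short claim is the componentwise reverse triangle inequality giving the nonexpansiveness of $|\cdot|$, since everything else is routine norm manipulation.
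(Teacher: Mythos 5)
Your proposal is correct and follows essentially the same route as the paper: form the gradient difference, apply the triangle inequality together with the operator-norm bound $\|A(x-y)\|\le\|A\|\,\|x-y\|$, and use the nonexpansiveness $\||x|-|y|\|\le\|x-y\|$ of the absolute-value map. The only difference is that you spell out the componentwise reverse-triangle-inequality justification for that last estimate, which the paper simply asserts.
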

\begin{proof}
For any $x,y\in\mathbb{R}^n$, it follows from \eqref{eq:diff} that
\begin{align*}
\|\nabla f(x)-\nabla f(y)\|&=\|2(Ax-|x|-b)-2(Ay-|y|-b)\|\\
&=\|2A(x-y)-2(|x|-|y|)\|\\
&\leq 2(\|A\|+1)\|x-y\|,
\end{align*}
where the last inequality uses the triangle inequality and $\||x|-|y|\| \leq \|x-y\|.$
\end{proof}

\begin{lem}\label{lem:2}
If $A-I$ is a symmetric positive definite matrix, then the function $f$ defined by \eqref{eq:ff} is strong convex on $\mathbb{R}^n$.
\end{lem}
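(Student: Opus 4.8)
The plan is to establish strong convexity of $f$ through the first-order (gradient) characterization. Since $f$ is continuously differentiable with $\nabla f(x)=2(Ax-|x|-b)$ by \eqref{eq:diff}, it suffices to show that $\nabla f$ is strongly monotone, i.e. that there is a constant $\mu>0$ with
\[
\langle \nabla f(x)-\nabla f(y),\,x-y\rangle \ge \mu\|x-y\|^2 \qquad \text{for all } x,y\in\mathbb{R}^n.
\]
This is equivalent to strong convexity of $f$ with modulus $\mu$, and it lets me work with $\nabla f$ rather than wrestling directly with the non-smooth term $\langle|x|,x\rangle$ in the definition of $f$.

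First I would compute the difference of gradients, $\nabla f(x)-\nabla f(y)=2A(x-y)-2(|x|-|y|)$, so that
\[
\langle \nabla f(x)-\nabla f(y),\,x-y\rangle = 2\langle A(x-y),x-y\rangle-2\langle |x|-|y|,\,x-y\rangle.
\]
The central idea is to absorb the troublesome absolute-value term using the splitting $A=(A-I)+I$, which gives $2\langle A(x-y),x-y\rangle=2\langle (A-I)(x-y),x-y\rangle+2\|x-y\|^2$.

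The key estimate, and the step I expect to be the crux, is the componentwise bound $\langle |x|-|y|,\,x-y\rangle\le\|x-y\|^2$: for each index $i$ one has $(|x_i|-|y_i|)(x_i-y_i)\le \big||x_i|-|y_i|\big|\,|x_i-y_i|\le |x_i-y_i|^2$ by the reverse triangle inequality $\big||x_i|-|y_i|\big|\le |x_i-y_i|$, and summing over $i$ yields the claim. Substituting this bound cancels the $+2\|x-y\|^2$ term against $-2\langle|x|-|y|,x-y\rangle$, leaving
\[
\langle \nabla f(x)-\nabla f(y),\,x-y\rangle \ge 2\langle (A-I)(x-y),x-y\rangle.
\]
Finally, since $A-I$ is symmetric positive definite, letting $\lambda_{\min}>0$ denote its smallest eigenvalue gives $\langle (A-I)(x-y),x-y\rangle\ge \lambda_{\min}\|x-y\|^2$, so strong monotonicity holds with $\mu=2\lambda_{\min}>0$ and $f$ is strongly convex. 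Note that the positive definiteness of $A-I$ (rather than merely of $A$) is exactly what is required: the non-convex term $-\langle|x|,x\rangle$ can erode the quadratic form by up to one full unit, which the $-I$ shift is precisely calibrated to compensate.
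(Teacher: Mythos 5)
Your proof is correct and takes essentially the same route as the paper: both reduce strong convexity to strong monotonicity of $\nabla f$, apply the key bound $\langle |x|-|y|,\,x-y\rangle \le \|x-y\|^2$, and conclude from $\lambda_{\min}(A-I)>0$. The only (negligible) difference is that the paper obtains the key bound via Cauchy--Schwarz together with $\||x|-|y|\|\le\|x-y\|$, whereas you prove it componentwise with the reverse triangle inequality.
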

\begin{proof}
To prove that $f$ is strong convex on $\mathbb{R}^n$, it suffices to show that $\nabla f$ is strongly monotonic on $\mathbb{R}^n$. For any $x,y\in\mathbb{R}^n$ with $x\neq y$, we have
\begin{align*}
\langle \nabla f(x)-\nabla f(y),x-y\rangle
&=\langle 2A(x-y)-2(|x|-|y|),x-y \rangle\\
&=2\langle A(x-y),x-y \rangle-2\langle |x|-|y|,x-y\rangle\\
&\geq 2\langle A(x-y),x-y \rangle-2\langle x-y,x-y\rangle\\
&=2(x-y)^\top (A-I)(x-y)\\
&\ge 2\lambda_{\min}(A-I)\|x-y\|^2
\end{align*}
where the first inequality uses $\||x|-|y|\| \leq \|x-y\|$. The proof is completed since the smallest eigenvalue $\lambda_{\min}(A-I) >0$.
\end{proof}

\begin{thm}\label{thm:1}
If $A-I$ is a symmetric positive definite matrix, then for any $b \in \mathbb{R}^n$, the function $f$ defined as in \eqref{eq:ff} has a unique minimum point, which is the unique solution of AVE~\eqref{eq:ave}.
\end{thm}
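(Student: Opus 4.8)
The plan is to combine the strong convexity from Lemma~\ref{lem:2} with the differentiability of $f$ and the gradient formula \eqref{eq:diff} to obtain existence, uniqueness, and the identification with the AVE solution in one stroke. Since $A-I$ is symmetric positive definite, Lemma~\ref{lem:2} guarantees that $f$ is strongly convex on $\mathbb{R}^n$, say with modulus $\mu=2\lambda_{\min}(A-I)>0$; this single fact drives the whole argument.

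First I would establish existence via coercivity. Strong convexity provides the quadratic minorant
\[
f(x)\ge f(0)+\langle\nabla f(0),x\rangle+\frac{\mu}{2}\|x\|^2
\]
for all $x\in\mathbb{R}^n$, whose right-hand side tends to $+\infty$ as $\|x\|\to\infty$, so $f$ is coercive. Because $f$ is continuous, its sublevel sets are closed and bounded, hence compact, and the Weierstrass extreme value theorem yields a global minimizer $x^*$. Uniqueness is then immediate: strong convexity implies strict convexity, and a strictly convex function admits at most one minimizer.

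It remains to identify $x^*$ with the AVE solution. Since $f$ is continuously differentiable and $x^*$ is an unconstrained minimizer, the first-order optimality condition gives $\nabla f(x^*)=0$, which by \eqref{eq:diff} is exactly $2(Ax^*-|x^*|-b)=0$; thus $x^*$ solves \eqref{eq:ave}. Conversely, convexity of $f$ makes stationarity sufficient for global minimality, so every solution of \eqref{eq:ave} is a global minimizer of $f$; the already-proved uniqueness of the minimizer then forces the AVE solution to be unique as well.

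The argument is essentially an assembly of standard convex-analysis facts, so I expect no serious obstacle. The one point requiring care is the bidirectional equivalence between being a minimizer and being a solution of \eqref{eq:ave}: differentiability makes stationarity necessary for a minimizer, while convexity makes it sufficient, and both directions are needed to transfer uniqueness from the optimization problem to the equation.
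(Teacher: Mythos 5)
Your proof is correct, but it runs in the opposite direction from the paper's. The paper first invokes an external solvability result: since $A-I$ is symmetric positive definite, the interval matrix $[A-I,A+I]$ is regular, so by \cite{wuli2018} the AVE~\eqref{eq:ave} has a unique solution $x_*$; it then observes via \eqref{eq:diff} that $\nabla f(x_*)=0$ and uses the strong convexity from Lemma~\ref{lem:2} to conclude that this $x_*$ is the unique minimizer of $f$. You instead work entirely on the optimization side: strong convexity gives coercivity, coercivity plus continuity gives existence of a minimizer by Weierstrass, strict convexity gives uniqueness, and then the two-way equivalence (stationarity necessary by differentiability, sufficient by convexity) transfers existence and uniqueness from the minimization problem to the equation. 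The trade-off is clear: your argument is self-contained and in fact \emph{reproves} the unique solvability of AVE~\eqref{eq:ave} under the hypothesis that $A-I$ is symmetric positive definite, rather than assuming it from the interval-matrix literature, whereas the paper's proof is shorter at the cost of importing that nontrivial external theorem. One small point to keep tidy in your write-up: Lemma~\ref{lem:2} is proved via strong monotonicity of $\nabla f$ with constant $2\lambda_{\min}(A-I)$, which for a differentiable function is equivalent to strong convexity with that same modulus, so your quadratic minorant
\begin{equation*}
f(x)\ \ge\ f(0)+\langle\nabla f(0),x\rangle+\lambda_{\min}(A-I)\,\|x\|^2
\end{equation*}
is the precise form implied by the lemma; this does not affect the coercivity argument.
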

\begin{proof}
Since $A-I$ is a symmetric positive definite matrix, then the interval matrix $[A-I,A+I]$ is regular. Therefore, for any $b \in \mathbb{R}^n$, AVE \eqref{eq:ave} has a unique solution $x_*$ \cite{wuli2018}. From \eqref{eq:diff}, we know $\nabla f(x_*) = 0$. In addition, it follows from Lemma \ref{lem:2} that the differentiable function $f$ is  strong convex on $\mathbb{R}^n$. Hence, $x_*$ is the unique minimum point of the function $f$.
\end{proof}

\section{Monotone block coordinate descent method}\label{sec:analysis}

Before proposing the monotone block coordinate descent method for solving AVE \eqref{eq:ave}, we first review the iterative method presented in \cite{nika2011}. In the method proposed in \cite{nika2011} (see Algorithm \ref{alg:1} for details), the Taylor expansion of $f(y^{(i)}+\alpha v^{(i)}+\beta v^{(j)})$ at $y^{(i)}$ is minimized to obtain $\alpha^{(i)}$ and $\beta^{(i)}$. In other words, $\alpha^{(i)}$ and $\beta^{(i)}$ are determined by minimizing \begin{equation}\label{eq:taylor}
f(y^{(i)}+\alpha v^{(i)}+\beta v^{(j)})=f(y^{(i)})+a\alpha^2+d\beta^2+2p^{(i)}\alpha+2p^{(j)}\beta+2c\alpha\beta
\end{equation}
in terms of $\alpha$ and $\beta$.  However, since the function $f$ is not actually globally second-order continuously differentiable, the Taylor expansion \eqref{eq:taylor} dose not hold on $\mathbb{R}^n$, which may make the iterative sequence $\{x^{(k)}\}$ generated by Algorithm~\ref{alg:1} satisfying $f(x^{(k+1)}) > f(x^{(k)})$ for some $k$. The following Example~\ref{exam:1} is used to demonstrate this phenomenon.

\begin{algorithm}[t]
\caption{The iterative method proposed in \cite{nika2011}.}\label{alg:1}
\begin{algorithmic}[1]
    \REQUIRE a symmetric matrix $A\in\mathbb{R}^{n\times n}$ and a vector $b\in\mathbb{R}^n$. Take the initial vector $x^{(0)}\in\mathbb{R}^n$. Set $k:=0$.

    \ENSURE an approximate solution of AVE \eqref{eq:ave}.

    \REPEAT
       \STATE For $i=1,2,\ldots,n$,
  \begin{align}
  &j=i-1,\nonumber\\
  &if~i=1,~then~j=n,\nonumber\\
  &v^{(i)}=I(:,i),~~v^{(j)}=I(:,j),\nonumber\\
  &\text{if}~i=1,~\text{then}~y^{(i)}=x^{(k)};\nonumber\\
  &C=A-\mathcal{D}(y^{(i)}),\nonumber\\
  &a=\langle Cv^{(i)},v^{(i)}\rangle,~~d=\langle Cv^{(j)},v^{(j)}\rangle,\nonumber\\
  &c=\langle Cv^{(i)},v^{(j)}\rangle=\langle Cv^{(j)},v^{(i)}\rangle,\nonumber\\
  &p^{(i)}=\langle Ay^{(i)}-|y^{(i)}|-b,v^{(i)}\rangle,~p^{(j)}=\langle Ay^{(i)}-|y^{(i)}|-b,v^{(j)}\rangle,\nonumber\\
  &\alpha^{(i)}=\frac{cp^{(j)}-dp^{(i)}}{ad-c^2},~\beta^{(i)}=\frac{cp^{(i)}-ap^{(j)}}{ad-c^2},\label{eq:step}\\
  &y^{(i+1)}=y^{(i)}+\alpha^{(i)}v^{(i)}+\beta^{(i)}v^{(j)}.\label{eq:yi}
  \end{align}

   \STATE Let $x^{(k+1)}=y^{(n+1)}, k = k+1$.

    \UNTIL{convergence};
    \RETURN the last $x^{(k+1)}$ as the approximation to the solution of AVE~\eqref{eq:ave}.
\end{algorithmic}
\end{algorithm}

\begin{exam}\label{exam:1}{\rm 
Consider AVE \eqref{eq:ave} with
\begin{equation*}
A=\begin{bmatrix}
\frac{3}{2}&\frac{1}{4}\\
\frac{1}{4}&\frac{3}{2}
\end{bmatrix}\quad \text{and}\quad b =\begin{bmatrix}
\frac{1}{4}\\
1
\end{bmatrix}.
\end{equation*}
Then $A-I$ is symmetric positive definite and the unique solution of this AVE is $x_*=[-\frac{2}{19},\frac{39}{19}]^\top$. In addition,
\begin{align}\label{eq:f'}
f(x)
=&\frac{3}{2}x_1^2+\frac{3}{2}x_2^2+\frac{1}{2}x_1x_2-x_1|x_1|-x_2|x_2|-\frac{1}{2}x_1-2x_2\nonumber\\
=&\begin{cases}
\frac{1}{2}x_1^2+\frac{1}{2}x_2^2+\frac{1}{2}x_1x_2-\frac{1}{2}x_1-2x_2 = f_1(x),~~\text{if}~x_1\geq0~\text{and}~x_2\geq0;\\
\frac{1}{2}x_1^2+\frac{5}{2}x_2^2+\frac{1}{2}x_1x_2-\frac{1}{2}x_1-2x_2=f_2(x),~~\text{if}~x_1\geq0~\text{and}~x_2<0;\\
\frac{5}{2}x_1^2+\frac{1}{2}x_2^2+\frac{1}{2}x_1x_2-\frac{1}{2}x_1-2x_2=f_3(x),~~\text{if}~x_1<0~\text{and}~x_2\geq0;\\
\frac{5}{2}x_1^2+\frac{5}{2}x_2^2+\frac{1}{2}x_1x_2-\frac{1}{2}x_1-2x_2=f_4(x),~~\text{if}~x_1<0~\text{and}~x_2<0.
\end{cases}
\end{align}

According to Algorithm \ref{alg:1}, $v^{(1)}=[1,0]^\top, v^{(2)}=[0,1]^\top$. The numerical results obtained by selecting different initial values of $x^{(0)}$ are shown in Table 1. As can be seen from Table \ref{table1}, when $x^{(0)}=[0.6,1.2]^\top$, we have $f(y^{(1)})<f(y^{(2)})$. The reason for this is that for the initial point $x^{(0)}=[0.6,1.2]^\top$, $\alpha^{(1)}=-\frac{19}{15}$ and $\beta^{(1)}=\frac{17}{15}$ are obtained by minimizing $g(\alpha,\beta)=f_1(y^{(1)})+\langle f_1^\prime(y^{(1)}),\alpha v^{(1)}+\beta v^{(2)} \rangle +\frac{1}{2}\langle f_1^{\prime\prime}(y^{(1)})(\alpha v^{(1)}+\beta v^{(2)}),\alpha v^{(1)}+\beta v^{(2)}\rangle$, which is the Taylor expansion of $f_1(y^{(1)}+\alpha v^{(1)}+\beta v^{(2)})$ at $y^{(1)}$. Then we have $f(y^{(1)}) =f_1(y^{(1)}) < f_1(y^{(2)})$. However, since $y^{(2)}_1 = -\frac{2}{3}< 0$, it follows from \eqref{eq:f'} that $f(y^{(2)})\neq f_1(y^{(2)})$. Indeed, we have $-\frac{23}{18}=f(y^{(2)})> f_1(y^{(2)})=-\frac{39}{18}> f(y^{(1)})$. In conclusion, since the minimum point of $f_1$ in $\mathbb{R}^2$ is different from the minimum point of function $f$ in $\mathbb{R}^2$, to minimize $f_1$ in $\mathbb{R}^2$ does not guarantee a decrease in the value of function $f$. Conversely, since $f_3$ and $f$ have the same minimum point in $\mathbb{R}^2$, minimizing $f_3$ on $\mathbb{R}^2$ is equivalent to minimizing $f$. This is why in Table \ref{table1} the Algorithm \ref{alg:1} can get the solution $x_*$ by iterating one step from $x^{(0)}=[-0.6,1.2]^\top$ or $y^{(2)} = [-\frac{2}{3}, \frac{7}{3}]$.

\renewcommand{\arraystretch}{1.5}
\begin{table}[H]
\centering
\caption{Numerical results of Example~\ref{exam:1}.}\label{table1}
\begin{tabular}{|c|c|c|c|c|c|c}\hline
$x^{(0)}=y^{(1)}$  &$f(y^{(1)})$   &$y^{(2)}$   &$f(y^{(2)})$    &$y^{(3)}$  &$f(y^{(3)})$   \\\hline
$[0.6,1.2]^\top$    &$-\frac{36}{25}$   &$[-\frac{2}{3},\frac{7}{3}]^\top$   &$-\frac{23}{18}$    &$[-\frac{2}{19},\frac{39}{19}]^\top$    &$-\frac{77}{38}$    \\\hline
$[-0.6,1.2]^\top$     &$-\frac{21}{25}$   &$[-\frac{2}{19},\frac{39}{19}]^\top$    &$-\frac{77}{38}$   &    &  \\\hline
\end{tabular}
\end{table}

}
\end{exam}

As demonstrated in Example \ref{exam:1}, the value of the objective function may increase during the iterations of Algorithm \ref{alg:1}, which makes Algorithm \ref{alg:1} non-monotonic and lacking a strict convergence analysis. This motivates us to propose a monotone block coordinate descent method which has a convergence guarantee. Our method is described in Algorithm \ref{alg:2}.

\begin{algorithm}[t]
\caption{Monotone block coordinate descent method for solving AVE \eqref{eq:ave}.}\label{alg:2}
\begin{algorithmic}[1]
    \REQUIRE a symmetric matrix $A\in\mathbb{R}^{n\times n}$ and the vector $b\in\mathbb{R}^n$. Take the initial vector $x^{(0)}\in\mathbb{R}^n$.

    \ENSURE an approximate solution of AVE \eqref{eq:ave}.

    \REPEAT
       \STATE Iteration $k + 1$ with $k\ge 0$. Given $x^{(k)}=[(x_{n_1}^{(k)})^\top, (x_{n_2}^{(k)})^\top,\cdots, (x_{n_N}^{(k)})^\top]^\top \in \mathbb{R}^{n}$ with $n_1 + n_2 + \cdots + n_N = n$, choose $s=r$ at iterations $r,r+N,r+2N,\ldots,$ for $r=1,\ldots,N$ and compute a new iterate $x^{(k+1)}$ satisfying
      \begin{align} \label{eq:alpha}
     x_{n_s}^{(k+1)} &= {\rm arg}\min_{x_{n_s}} f(x_{n_1}^{(k)}, \cdots, x_{n_{s-1}}^{(k)}, x_{n_s}, x_{n_{s+1}}^{(k)},\cdots,x_{n_{N}}^{(k)}),\\\nonumber
     x_{n_j}^{(k+1)} &= x_{n_j}^{(k)}, \quad \forall j\neq s.
      \end{align}

    \UNTIL{convergence};
    \RETURN the last $x^{(k+1)}$ as the approximation to the solution of AVE~\eqref{eq:ave}.
\end{algorithmic}
\end{algorithm}

In the following, we will discuss how to solve \eqref{eq:alpha}. For $s = r$ with $r\in \{1,2,\cdots,N\}$, let $v^{(i)} = [1,0]^\top$ and $v^{(j)} =[0,1]^\top$. Then we have $x_{n_s}^{(k+1)} = x_{n_s}^{(k)} + \alpha^{(k)}v^{(i)} + \beta^{(k)}v^{(j)}$, in which
\begin{equation}\label{eq:step}
[ \alpha^{(k)},\beta^{(k)}] = {\rm arg}\min_{\alpha,\beta} f(x^{(k)} + \alpha I(:,2s - 1)+\beta I(:,2s)).
\end{equation}
In order to solve \eqref{eq:step}, we conduct some properties of $f$.

Inspired by Example \ref{exam:1}, for
\begin{align}\label{eq:f}
f(x)
&=\begin{bmatrix}
x_1 & x_2
\end{bmatrix}
\begin{bmatrix}
a_{11} & a_{12}\\
a_{12} & a_{22}
\end{bmatrix}
\begin{bmatrix}
x_1 \\ x_2
\end{bmatrix}
-\begin{bmatrix}
x_1 & x_2
\end{bmatrix}
\begin{bmatrix}
|x_1| \\ |x_2|
\end{bmatrix}
-2\begin{bmatrix}
b_1 & b_2
\end{bmatrix}
\begin{bmatrix}
x_1 \\ x_2
\end{bmatrix}\nonumber\\
&=a_{11} x_1^2+a_{22} x_2^2+2a_{12}x_1x_2-x_1|x_1|-x_2|x_2|-2b_1x_1-2b_2x_2\nonumber\\
&=\begin{cases}
(a_{11}-1)x_1^2+(a_{22}-1)x_2^2+2a_{12}x_1x_2-2b_1x_1-2b_2x_2=f_1(x),~\text{if}~x_1\ge 0~\text{and}~x_2\ge 0;\\
(a_{11}-1)x_1^2+(a_{22}+1)x_2^2+2a_{12}x_1x_2-2b_1x_1-2b_2x_2=f_2(x),~\text{if}~x_1\ge 0~\text{and}~x_2<0;\\
(a_{11}+1)x_1^2+(a_{22}-1)x_2^2+2a_{12}x_1x_2-2b_1x_1-2b_2x_2=f_3(x),
~\text{if}~x_1<0~\text{and}~x_2\ge 0;\\
(a_{11}+1)x_1^2+(a_{22}+1)x_2^2+2a_{12}x_1x_2-2b_1x_1-2b_2x_2=f_4(x),
~\text{if}~x_1<0~\text{and}~x_2<0,
\end{cases}
\end{align}
we present the following lemma.

\begin{lem}\label{lem:3}
Let $f$ and $f_i(i = 1,2,3,4)$ be defined as in \eqref{eq:f}. If $A-I\in \mathbb{R}^{2\times 2}$ is a symmetric positive definite matrix, then for any $b\in \mathbb{R}^2$ we have
\begin{itemize}
  \item [(1)] If we extend the domain of $f_1$ to $\mathbb{R}^2$, then $f_1$ has the unique minimum point
      $$p=\left[\frac{b_2a_{12}-b_1(a_{22}-1)}{a_{12}^2-(a_{11}-1)(a_{22}-1)},
\frac{b_1a_{12}-b_2(a_{11}-1)}{a_{12}^2-(a_{11}-1)(a_{22}-1)}\right]^\top;$$

  \item [(2)] If we extend the domain of $f_2$ to $\mathbb{R}^2$, then $f_2$ has the unique minimum point
      $$q =\left[\frac{b_2a_{12}-b_1(a_{22}+1)}{a_{12}^2-(a_{11}-1)(a_{22}+1)},
\frac{b_1a_{12}-b_2(a_{11}-1)}{a_{12}^2-(a_{11}-1)(a_{22}+1)}\right]^\top;$$

  \item [(3)] If we extend the domain of $f_3$ to $\mathbb{R}^2$, then $f_3$ has the unique minimum point
      $$r =\left[\frac{b_2a_{12}-b_1(a_{22}-1)}{a_{12}^2-(a_{11}+1)(a_{22}-1)},
\frac{b_1a_{12}-b_2(a_{11}+1)}{a_{12}^2-(a_{11}+1)(a_{22}-1)}\right]^\top;$$

  \item [(4)] If we extend the domain of $f_4$ to $\mathbb{R}^2$, then $f_4$ has the unique minimum point
      $$s = \left[\frac{b_2a_{12}-b_1(a_{22}+1)}{a_{12}^2-(a_{11}+1)(a_{22}+1)},
\frac{b_1a_{12}-b_2(a_{11}+1)}{a_{12}^2-(a_{11}+1)(a_{22}+1)}\right]^\top;$$

\item[(5)] One of $p$, $q$, $r$ and $s$ is the unique minimum point of $f$ in $\mathbb{R}^2$.
\end{itemize}
\end{lem}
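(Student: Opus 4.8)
The plan is to handle the explicit minimizers (1)--(4) by direct convex quadratic minimization, and then deduce the selection claim (5) from the strong convexity of $f$ together with a gradient-matching argument at the minimizer.

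First, for (1)--(4), I would write each piece uniformly as $f_i(x)=\langle M_i x, x\rangle - 2\langle b,x\rangle$ with $M_i = A - E_i$, where $E_1 = I$, $E_2 = \mathrm{diag}(1,-1)$, $E_3 = \mathrm{diag}(-1,1)$, $E_4 = -I$ record the sign pattern of each quadrant. Since $\nabla f_i(x) = 2(M_i x - b)$, the only stationary point solves $M_i x = b$, and Cramer's rule applied to this $2\times 2$ system reproduces the stated vectors $p,q,r,s$; this is the routine computation I would not expand. What actually needs checking is that each stationary point is a genuine minimizer, i.e.\ $M_i$ is positive definite. From $A - I$ positive definite I read off $a_{11} - 1 > 0$, $a_{22} - 1 > 0$ and $\det(A-I) > 0$. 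Then $M_4 = A + I = (A-I) + 2I$ is positive definite as a sum of positive definite matrices, while for $M_2$ and $M_3$ I would note $\det M_2 = \det(A-I) + 2(a_{11}-1) > 0$ and $\det M_3 = \det(A-I) + 2(a_{22}-1) > 0$ together with positive leading entries, giving positive definiteness and hence uniqueness of the minimizer in each case. These positive determinants also guarantee that the denominators appearing in $p,q,r,s$ are nonzero, so the four formulas are well defined.

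For (5), I would first invoke Lemma~\ref{lem:2} (or Theorem~\ref{thm:1}): $f$ is strongly convex on $\mathbb{R}^2$, so it has a unique minimizer $x_*$, characterized by $\nabla f(x_*) = 2(Ax_* - |x_*| - b) = 0$. The crux is to identify $x_*$ with one of the four candidates. I would choose a sign vector $\epsilon \in \{1,-1\}^2$ that agrees with $\mathrm{sign}(x_{*,\ell})$ on every coordinate where $x_{*,\ell}\neq 0$ (the choice being free where $x_{*,\ell}=0$), and let $f_i$ be the corresponding piece with matrix $M_i = A - \mathrm{diag}(\epsilon)$. By construction $x_*$ lies in the closed region on which $f \equiv f_i$, and more importantly the identity $\mathrm{diag}(\epsilon)\,x_* = |x_*|$ holds componentwise: the nonzero coordinates match by the sign choice, and the zero coordinates contribute $0$ on both sides regardless of $\epsilon$.

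Given this identity, gradient matching is immediate, since $\nabla f_i(x_*) = 2(M_i x_* - b) = 2(Ax_* - \mathrm{diag}(\epsilon)x_* - b) = 2(Ax_* - |x_*| - b) = \nabla f(x_*) = 0$. Because $f_i$ is strictly convex (by the positive definiteness of $M_i$ established above), its unique stationary point is its global minimizer, so $x_*$ coincides with whichever of $p,q,r,s$ corresponds to $i$, proving (5). The step I expect to be the main obstacle is exactly the boundary case in which some coordinate of $x_*$ vanishes: there $x_*$ sits on the interface between two pieces and $f$ fails to be twice differentiable, so one must argue carefully that the first-order condition for $f$ transfers to the chosen piece. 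The identity $\mathrm{diag}(\epsilon)\,x_* = |x_*|$ is precisely what dissolves this difficulty, since the Hessian entry that differs between adjacent pieces multiplies a coordinate that is zero there.
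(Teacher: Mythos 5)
Your proof is correct, and for (1)--(4) it matches the paper's approach in substance: the paper differentiates each $f_i$ and invokes the positivity conditions \eqref{eq:condition}, while you solve $M_i x = b$ with $M_i = A - E_i$ by Cramer's rule --- the same direct quadratic minimization. Your explicit positive-definiteness check for all four matrices (e.g.\ $\det M_2 = \det(A-I) + 2(a_{11}-1) > 0$) is something the paper compresses into ``similarly''; it also sidesteps the slight inaccuracy in \eqref{eq:condition}, which asserts $a_{12}^2>0$ even though $a_{12}=0$ is possible. The genuine divergence is in (5). The paper argues by contradiction via a sign-and-inequality case analysis: assuming every candidate violates its own quadrant condition, it derives four inequalities in $a_{12}$, $b_1$, $b_2$ that force $a_{12}^2 > (a_{11}-1)(a_{22}-1)$, contradicting \eqref{eq:condition}, and then runs a second round of the same analysis to show the four quadrant conditions are mutually exclusive. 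You instead start from the unique minimizer $x_*$ furnished by Lemma~\ref{lem:2}/Theorem~\ref{thm:1}, choose a sign vector $\epsilon$ agreeing with $x_*$ on its nonzero coordinates, observe that $\mathrm{diag}(\epsilon)\,x_* = |x_*|$ regardless of the choice on zero coordinates, and conclude by gradient matching ($\nabla f_i(x_*) = \nabla f(x_*) = 0$) plus strict convexity of the matching piece that $x_*$ equals one of $p,q,r,s$. Your route is shorter, handles the boundary case $x_{*,\ell}=0$ transparently (precisely where the paper's strict-versus-nonstrict quadrant inequalities require care), and generalizes verbatim to $n$ dimensions with $2^n$ sign patterns. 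What the paper's longer computation buys is an a priori selection rule --- at least one candidate satisfies its own checkable quadrant condition, and essentially only one --- which is what Algorithm~\ref{alg:2} needs in order to pick the step formula without knowing $x_*$ in advance; note, however, that this rule also follows from your argument (take $\epsilon_\ell=+1$ on zero coordinates, so the matching candidate lies in its own region as the paper defines it) combined with uniqueness of the minimizer: two candidates each lying in its own region would both be global minimizers of $f$, hence equal, which their disjoint sign patterns forbid.
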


\begin{proof}
Since $A - I\in \mathbb{R}^2$ is a symmetric positive definite matrix, we have
\begin{equation}\label{eq:condition}
(a_{11}-1)(a_{22}-1)>a_{12}^2>0,\quad a_{11}-1>0,\quad a_{22}-1>0.
\end{equation}
For the function $f_1$ in $\mathbb{R}^2$, we have
\begin{align*}
\frac{\partial f_1}{\partial x_1}&=2(a_{11}-1)x_1+2a_{12}x_2-2b_1,\\
\frac{\partial f_1}{\partial x_2}&=2(a_{22}-1)x_2+2a_{12}x_1-2b_2,\\
\frac{\partial^2 f_1}{\partial x_1^2}&=2(a_{11}-1),\\
\frac{\partial^2 f_1}{\partial x_2^2}&=2(a_{22}-1),\\
\frac{\partial^2 f_1}{\partial x_1\partial x_2}&=2a_{12}.\\
\end{align*}
Then, for any $b\in \mathbb{R}^2$, it follows from \eqref{eq:condition} that $f_1$ has the unique minimum point $p$ in $\mathbb{R}^2$. Similarly, we can prove the results of (2)--(4).

In the following, we will prove the result in (5). Since $A-I$ is a symmetric positive definite matrix, for any $b\in \mathbb{R}^2$, Lemma \ref{lem:2} implies that $f$ has a unique minimum point in $\mathbb{R}^2$. In addition, since $f$ is convex, any local minimum point is the global minimum point. To complete the proof, we are required to verify that one of $p$, $q$, $r$ and $s$ is the minimum point of $f$ in $\mathbb{R}^2$. To this end, we first show that there is no $A\in \mathbb{R}^{2\times 2}$ and $b\in \mathbb{R}^2$ with $A-I$ being symmetric positive definite such that
\begin{align*}
&p_1 <0\quad \text{or} \quad p_2 <0,\\
&q_1 <0\quad \text{or} \quad q_2 \ge 0,\\
&r_1 \ge 0\quad \text{or} \quad r_2 < 0,\\
&s_1 \ge 0\quad \text{or} \quad s_2 \ge 0.
\end{align*}
This implies that at least one of  $p$, $q$, $r$ and $s$ is the minimum point of $f$ in $\mathbb{R}^2$.
Indeed, if $p_1<0$, then $r_1\ge 0$ is not true, which implies $r_2 <0$. It follows from $r_2 <0$ that $q_2\ge 0$ and $s_1\ge 0$. Then we have
\begin{align}\label{eq:1a}
b_2a_{12}-b_1(a_{22}-1)>0,\\\label{eq:4a}
b_2a_{12}-b_1(a_{22}+1)\leq0,\\\label{eq:2b}
b_1a_{12}-b_2(a_{11}-1)\leq0.\\\label{eq:3b}
b_1a_{12}-b_2(a_{11}+1)>0,
\end{align}
It follows from \eqref{eq:1a} and \eqref{eq:4a} that $b_1>0$ and \eqref{eq:2b} and \eqref{eq:3b} imply $b_2<0$. Substituting $b_1>0,b_2<0$ into \eqref{eq:1a} yields that $a_{12}<0$. Then it follows from \eqref{eq:1a} and \eqref{eq:2b} that $\frac{a_{12}}{a_{22}-1}<\frac{b_1}{b_2}$ and $\frac{a_{11}-1}{a_{12}}\geq\frac{b_1}{b_2}$, from which we have $\frac{a_{12}}{a_{22}-1}<\frac{b_1}{b_2}\leq\frac{a_{11}-1}{a_{12}}$. Hence, $a_{12}^2>(a_{11}-1)(a_{22}-1)$, which contradicts to the first inequality of \eqref{eq:condition}.

If $p_2 < 0$ is satisfied, we have $q_1 < 0$, $r_1\ge 0$ and $s_2\ge 0$. Namely,
\begin{align}\label{eq:1b}
b_1a_{12}-b_2(a_{11}-1)>0,\\\label{eq:4b}
b_1a_{12}-b_2(a_{11}+1)\leq0,\\\label{eq:2a}
b_2a_{12}-b_1(a_{22}+1)>0,\\\label{eq:3a}
b_2a_{12}-b_1(a_{22}-1)\leq0.
\end{align}
From \eqref{eq:1b} and \eqref{eq:4b}, we have $b_2>0$. In addition, it follows from \eqref{eq:2a} and \eqref{eq:3a}that $b_1<0$. Substituting $b_1<0$ and $b_2>0$ into \eqref{eq:1b} yields that $a_{12}<0$. Then it follows from \eqref{eq:3a} and \eqref{eq:1b} that $\frac{a_{12}}{a_{22}-1}\leq\frac{b_1}{b_2}$ and $\frac{a_{11}-1}{a_{12}}>\frac{b_1}{b_2}$, which imply $\frac{a_{12}}{a_{22}-1}\leq\frac{b_1}{b_2}<\frac{a_{11}-1}{a_{12}}$. Hence, $a_{12}^2>(a_{11}-1)(a_{22}-1)$, which also contradicts to the first inequality of \eqref{eq:condition}.

In the following, we will show that only one of the following four cases holds:
\begin{equation*}
\begin{array}{ll}
\text{[(i)]}~p\ge 0; &[(ii)]~q_1\ge 0~\text{and}~q_2<0;\\
\text{[(iii)]}~r_1<0~\text{and}~r_2\ge 0; & \text{[(iv)]}~ s<0.
\end{array}
\end{equation*}

When (i) holds,  (ii) and (iii) cannot be hold. Hence, we only need to prove that (iv) is also not satisfied. To this end, we only to prove that the following inequalities
\begin{align}\label{eq:1.1}
b_2a_{12}-b_1(a_{22}-1)\leq0,\\\label{eq:1.2}
b_1a_{12}-b_2(a_{11}-1)\leq0,\\\label{eq:4.1}
b_2a_{12}-b_1(a_{22}+1)>0,  \\ \label{eq:4.2}
b_1a_{12}-b_2(a_{11}+1)>0,
\end{align}
cannot hold simultaneously. Indeed, from \eqref{eq:1.1} and \eqref{eq:4.1}, we get $b_1<0$. In the same way, from \eqref{eq:1.2} and \eqref{eq:4.2} we have $b_2<0$. Substituting $b_1<0$ and $b_2<0$ into \eqref{eq:1.1} yields that $a_{12}>0$. Then it follows from  \eqref{eq:1.1} and \eqref{eq:1.2} that
$$\frac{a_{22}-1}{a_{12}} \leq \frac{b_2}{b_1} \leq \frac{a_{12}}{a_{11}-1},$$
from which we have $a_{12}^2>(a_{11}-1)(a_{22}-1)$, which contradicts to the first
inequality of~\eqref{eq:condition}.

If (ii) holds, (i) and (iv) cannot be satisfied. Hence,  we only need to prove that (iii) is also not satisfied. Namely, the inequalities
\begin{align} \label{eq:2.1}
b_2a_{12}-b_1(a_{22}+1)\leq0,\\\label{eq:2.2}
b_1a_{12}-b_2(a_{11}-1)>0, \\\label{eq:3.1}
b_2a_{12}-b_1(a_{22}-1)>0, \\\label{eq:3.2}
b_1a_{12}-b_2(a_{11}+1)\leq0,
\end{align}
do not have a solution. From \eqref{eq:2.1} and \eqref{eq:3.1}, we get $b_1>0$. In addition, it follows from \eqref{eq:2.2} and \eqref{eq:3.2} that $b_2>0$. Substituting $b_1>0$ and $b_2>0$ into \eqref{eq:2.2} yields that $a_{12}>0$. Then it follows from \eqref{eq:2.2} and \eqref{eq:3.1} that
$$\frac{a_{22}-1}{a_{12}} < \frac{b_2}{b_1} < \frac{a_{12}}{a_{11}-1},$$
that is, $a_{12}^2>(a_{11}-1)(a_{22}-1)$, which contradicts to the first inequality of \eqref{eq:condition}.

If (iii) holds, (i) and (iv) cannot hold. Moreover, we have proved that (ii) and (iii) cannot hold simultaneously.

If (iv) is satisfied, (ii) and (iii) cannot hold. In addition, we have proved that (i) and (iv) cannot hold simultaneously.
\end{proof}

For certain $s\in \{1,2,\cdots,N\}$, for simplicity, we denote $v^{(i)} = I(:,2s -1)$ and $v^{(j)} = I(:,2s)$ in the following. Let
\begin{align*}
g(\alpha,\beta)
&=f(x^{(k)}+\alpha v^{(i)}+\beta v^{(j)})\\
&=(x^{(k)}+\alpha v^{(i)}+\beta v^{(j)})^\top A (x^{(k)}+\alpha v^{(i)}+\beta v^{(j)})-(x^{(k)}+\alpha v^{(i)}+\beta v^{(j)})^\top |x^{(k)}+\alpha v^{(i)}+\beta v^{(j)}|\\
&-2b^\top(x^{(k)}+\alpha v^{(i)}+\beta v^{(j)})\\
&=\alpha^2 (v^{(i)})^\top Av^{(i)}+\beta^2 (v^{(j)})^\top Av^{(j)}+2\alpha (x^{(k)})^\top A v^{(i)}+2\beta (x^{(k)})^\top A v^{(j)} + 2\alpha \beta (v^{(i)})^\top Av^{(j)}\\
&\quad -(x^{(k)}+\alpha v^{(i)}+\beta v^{(j)})^\top \mathcal{D}( x^{(k)}+\alpha v^{(i)}+\beta v^{(j)})(x^{(k)}+\alpha v^{(i)}+\beta v^{(j)})\\
&\quad +(x^{(k)})^\top A x^{(k)}-2b^\top x^{(k)}-2\alpha b^\top v^{(i)}-2\beta b^\top v^{(j)}.
\end{align*}
Then we have
\begin{equation*}
g(\alpha,\beta)=\left\{
\begin{aligned}
&g_1(\alpha,\beta),~~\text{if}~x^{(k)}_i+\alpha \ge 0~\text{and}~x^{(k)}_j+\beta \ge 0;\\
&g_2(\alpha,\beta),~~\text{if}~x^{(k)}_i+\alpha \ge 0~\text{and}~x^{(k)}_j+\beta < 0;\\
&g_3(\alpha,\beta),~~\text{if}~x^{(k)}_i+\alpha < 0~\text{and}~x^{(k)}_j+\beta \ge 0;\\
&g_4(\alpha,\beta),~~\text{if}~x^{(k)}_i+\alpha < 0~\text{and}~x^{(k)}_j+\beta < 0,\\
\end{aligned}
\right.
\end{equation*}
where
\begin{align*}
g_1(\alpha,\beta)
&=(a_{ii}-1)\alpha^2+2\alpha[(x^{(k)})^\top Av^{(i)}-b^\top v^{(i)}-x^{(k)}_i]-(x^{(k)}_i)^2\\
&\quad +(a_{jj}-1)\beta^2+2\beta[(x^{(k)})^\top Av^{(j)}-b^\top v^{(j)}-x^{(k)}_j]-(x^{(k)}_j)^2\\
&\quad +2\alpha\beta a_{ij}+(x^{(k)})^\top Ax^{(k)}-2b^\top x^{(k)}-\sum\limits_{l\neq i,j}x^{(k)}_l |x^{(k)}_l|,\\
g_2(\alpha,\beta)
&=(a_{ii}-1)\alpha^2+2\alpha[(x^{(k)})^\top Av^{(i)}-b^\top v^{(i)}-x^{(k)}_i]-(x^{(k)}_i)^2\\
&\quad +(a_{jj}+1)\beta^2+2\beta[(x^{(k)})^\top Av^{(j)}-b^\top v^{(j)}+x^{(k)}_j]+(x^{(k)}_j)^2\\
&\quad +2\alpha\beta a_{ij}+(x^{(k)})^\top Ax^{(k)}-2b^\top x^{(k)}-\sum\limits_{l\neq i,j}x^{(k)}_l |x^{(k)}_l|,\\
g_3(\alpha,\beta)
&=(a_{ii}+1)\alpha^2+2\alpha[(x^{(k)})^\top Av^{(i)}-b^\top v^{(i)}+x^{(k)}_i]+(x^{(k)}_i)^2\\
&\quad +(a_{jj}-1)\beta^2+2\beta[(x^{(k)})^\top Av^{(j)}-b^\top v^{(j)}-x^{(k)}_j]-(x^{(k)}_j)^2\\
&\quad +2\alpha\beta a_{ij}+(x^{(k)})^\top Ax^{(k)}-2b^\top x^{(k)}-\sum\limits_{l\neq i,j}x^{(k)}_l |x^{(k)}_l|,\\
g_4(\alpha,\beta)
&=(a_{ii}+1)\alpha^2+2\alpha[(x^{(k)})^\top Av^{(i)}-b^\top v^{(i)}+x^{(k)}_i]+(x^{(k)}_i)^2\\
&\quad +(a_{jj}+1)\beta^2+2\beta[(x^{(k)})^\top Av^{(j)}-b^\top v^{(j)}+x^{(k)}_j]+(x^{(k)}_j)^2\\
&\quad +2\alpha\beta a_{ij}+(x^{(k)})^\top Ax^{(k)}-2b^\top x^{(k)}-\sum\limits_{l\neq i,j}x^{(k)}_l |x^{(k)}_l|.
\end{align*}
Since adding or subtracting a constant does not change the minimum point of a function, finding the minimum point of $g(\alpha,\beta)$ is equivalent to finding the minimum point of $\tilde{g}(\alpha,\beta)$, where
\begin{equation*}
\tilde{g}(\alpha,\beta)=\left\{
\begin{aligned}
&\tilde{g}_1(\alpha,\beta),~~\text{if}~x^{(k)}_i+\alpha \ge 0~\text{and}~x^{(k)}_j+\beta \ge 0,\\
&\tilde{g}_2(\alpha,\beta),~~\text{if}~x^{(k)}_i+\alpha \ge 0~\text{and}~x^{(k)}_j+\beta < 0,\\
&\tilde{g}_3(\alpha,\beta),~~\text{if}~x^{(k)}_i+\alpha < 0~\text{and}~x^{(k)}_j+\beta \ge 0,\\
&\tilde{g}_4(\alpha,\beta),~~\text{if}~x^{(k)}_i+\alpha < 0~\text{and}~x^{(k)}_j+\beta< 0,\\
\end{aligned}
\right.
\end{equation*}
in which
\begin{align*}
\tilde{g}_1(\alpha,\beta)
&=(a_{ii}-1)\alpha^2+2\alpha[(x^{(k)})^\top Av^{(i)}-b^\top v^{(i)}-x^{(k)}_i]-(x^{(k)}_i)^2\\
&\quad +(a_{jj}-1)\beta^2+2\beta[(x^{(k)})^\top Av^{(j)}-b^\top v^{(j)}-x^{(k)}_j]-(x^{(k)}_j)^2+2\alpha\beta a_{ij},\\
\tilde{g}_2(\alpha,\beta)
&=(a_{ii}-1)\alpha^2+2\alpha[(x^{(k)})^\top Av^{(i)}-b^\top v^{(i)}-x^{(k)}_i]-(x^{(k)}_i)^2\\
&\quad +(a_{jj}+1)\beta^2+2\beta[(x^{(k)})^\top Av^{(j)}-b^\top v^{(j)}+x^{(k)}_j]+(x^{(k)}_j)^2+2\alpha\beta a_{ij},\\
\tilde{g}_3(\alpha,\beta)
&=(a_{ii}+1)\alpha^2+2\alpha[(x^{(k)})^\top Av^{(i)}-b^\top v^{(i)}+x^{(k)}_i]+(x^{(k)}_i)^2\\
&\quad +(a_{jj}-1)\beta^2+2\beta[(x^{(k)})^\top Av^{(j)}-b^\top v^{(j)}-x^{(k)}_j]-(x^{(k)}_j)^2+2\alpha\beta a_{ij},\\
\tilde{g}_4(\alpha,\beta)
&=(a_{ii}+1)\alpha^2+2\alpha[(x^{(k)})^\top Av^{(i)}-b^\top v^{(i)}+x^{(k)}_i]+(x^{(k)}_i)^2\\
&\quad +(a_{jj}+1)\beta^2+2\beta[(x^{(k)})^\top Av^{(j)}-b^\top v^{(j)}+x^{(k)}_j]+(x^{(k)}_j)^2+2\alpha\beta a_{ij}.
\end{align*}

Let $t_1=x^{(k)}_i+\alpha,~t_2=x^{(k)}_j+\beta,~w_1=(v^{(i)})^\top(Ax^{(k)}-b),~w_2=(v^{(j)})^\top(Ax^{(k)}-b)$, then $\tilde{g}(\alpha,\beta)$ can be converted to $h(t_1,t_2)$ with
\begin{equation*}
h(t_1,t_2)=\left\{
\begin{aligned}
&h_1(t_1,t_2),~~\text{if}~t_1 \ge 0~\text{and}~t_2 \ge 0,\\
&h_2(t_1,t_2),~~\text{if}~t_1 \ge 0~\text{and}~t_2 < 0,\\
&h_3(t_1,t_2),~~\text{if}~t_1 < 0~\text{and}~t_2 \ge 0,\\
&h_4(t_1,t_2),~~\text{if}~t_1 < 0~\text{and}~t_2 < 0,\\
\end{aligned}
\right.
\end{equation*}
in which
\begin{align*}
h_1(t_1,t_2)
&=(a_{ii}-1)t_1^2-2(x^{(k)}_i a_{ii}-w_1+a_{ij}x^{(k)}_j)t_1+a_{ii}(x^{(k)}_i)^2-2w_1x^{(k)}_i\\
&\quad +(a_{jj}-1)t_2^2-2(x^{(k)}_j a_{jj}-w_2+a_{ij}x^{(k)}_i)t_2+a_{jj}(x^{(k)}_j)^2-2w_2x^{(k)}_j\\
&\quad +2a_{ij}(t_1t_2+x^{(k)}_i x^{(k)}_j),\\
h_2(t_1,t_2)
&=(a_{ii}-1)t_1^2-2(x^{(k)}_i a_{ii}-w_1+a_{ij}x^{(k)}_j)t_1+a_{ii}(x^{(k)}_i)^2-2w_1x^{(k)}_i\\
&\quad +(a_{jj}+1)t_2^2-2(x^{(k)}_j a_{jj}-w_2+a_{ij}x^{(k)}_i)t_2+a_{jj}(x^{(k)}_j)^2-2w_2x^{(k)}_j\\
&\quad +2a_{ij}(t_1t_2+x^{(k)}_i x^{(k)}_j),\\
h_3(t_1,t_2)
&=(a_{ii}+1)t_1^2-2(x^{(k)}_i a_{ii}-w_1+a_{ij}x^{(k)}_j)t_1+a_{ii}(x^{(k)}_i)^2-2w_1x^{(k)}_i\\
&\quad +(a_{jj}-1)t_2^2-2(x^{(k)}_j a_{jj}-w_2+a_{ij}x^{(k)}_i)t_2+a_{jj}(x^{(k)}_j)^2-2w_2x^{(k)}_j\\
&\quad +2a_{ij}(t_1t_2+x^{(k)}_i x^{(k)}_j),\\
h_4(t_1,t_2)
&=(a_{ii}+1)t_1^2-2(x^{(k)}_i a_{ii}-w_1+a_{ij}x^{(k)}_j)t_1+a_{ii}(x^{(k)}_i)^2-2w_1x^{(k)}_i\\
&\quad +(a_{jj}+1)t_2^2-2(x^{(k)}_j a_{jj}-w_2+a_{ij}x^{(k)}_i)t_2+a_{jj}(x^{(k)}_j)^2-2w_2x^{(k)}_j\\
&\quad +2a_{ij}(t_1t_2+x^{(k)}_i x^{(k)}_j).
\end{align*}
Since adding or subtracting a constant does not change the minimum point of a function, finding the minimum point of $h(t_1,t_2)$ is equivalent to finding the minimum point of $\tilde{h}(t_1,t_2)$ with
\begin{equation*}
\tilde{h}(t_1,t_2)=\left\{
\begin{aligned}
&\tilde{h}_1(t_1,t_2),~~\text{if}~t_1 \ge 0~\text{and}~t_2 \ge 0,\\
&\tilde{h}_2(t_1,t_2),~~\text{if}~t_1 \ge 0~\text{and}~t_2 < 0,\\
&\tilde{h}_3(t_1,t_2),~~\text{if}~t_1 < 0~\text{and}~t_2 \ge 0,\\
&\tilde{h}_4(t_1,t_2),~~\text{if}~t_1 < 0~\text{and}~t_2 < 0,\\
\end{aligned}
\right.
\end{equation*}
where
\begin{align*}
\tilde{h}_1(t_1,t_2)
&=(a_{ii}-1)t_1^2-2(x^{(k)}_i a_{ii}-w_1+a_{ij}x^{(k)}_j)t_1\\
&\quad +(a_{jj}-1)t_2^2-2(x^{(k)}_j a_{jj}-w_2+a_{ij}x^{(k)}_i)t_2+2a_{ij}t_1t_2,\\
\tilde{h}_2(t_1,t_2)
&=(a_{ii}-1)t_1^2-2(x^{(k)}_i a_{ii}-w_1+a_{ij}x^{(k)}_j)t_1\\
&\quad +(a_{jj}+1)t_2^2-2(x^{(k)}_j a_{jj}-w_2+a_{ij}x^{(k)}_i)t_2+2a_{ij}t_1t_2,\\
\tilde{h}_3(t_1,t_2)
&=(a_{ii}+1)t_1^2-2(x^{(k)}_i a_{ii}-w_1+a_{ij}x^{(k)}_j)t_1\\
&\quad +(a_{jj}-1)t_2^2-2(x^{(k)}_j a_{jj}-w_2+a_{ij}x^{(k)}_i)t_2+2a_{ij}t_1t_2,\\
\tilde{h}_4(t_1,t_2)
&=(a_{ii}+1)t_1^2-2(x^{(k)}_i a_{ii}-w_1+a_{ij}x^{(k)}_j)t_1\\
&\quad +(a_{jj}+1)t_2^2-2(x^{(k)}_j a_{jj}-w_2+a_{ij}x^{(k)}_i)t_2+2a_{ij}t_1t_2.
\end{align*}

Since $A-I$ is symmetric positive definite, the submatrix
$$
\begin{bmatrix}
a_{ii}& a_{ij}\\
a_{ji}& a_{jj}
\end{bmatrix}
$$
is also symmetric positive definite. Then it follows from Lemma \ref{lem:3} that
\begin{itemize}
  \item [(1)] If we extend the domain of $\tilde{h}_1(t_1,t_2)$ to $\mathbb{R}^2$, then $\tilde{h}_1(t_1,t_2)$ has the unique minimum point $[t_1,t_2]^\top$ with
      \begin{align}\label{eq:h1t1}
      t_1&=\quad\frac{-w_2 a_{ij}+w_1 a_{jj}-w_1+x^{(k)}_j a_{ij}+x^{(k)}_i(a_{ij}^2-a_{ii}a_{jj}+a_{ii})}{a_{ij}^2-(a_{ii}-1)(a_{jj}-1)},\\\label{eq:h1t2}
t_2&=\frac{-w_1 a_{ij}+w_2 a_{ii}-w_2+x^{(k)}_i a_{ij}+x^{(k)}_j(a_{ij}^2-a_{ii}a_{jj}+a_{jj})}{a_{ij}^2-(a_{ii}-1)(a_{jj}-1)};
\end{align}

  \item [(2)] If we extend the domain of $\tilde{h}_2(t_1,t_2)$ to $\mathbb{R}^2$, then $\tilde{h}_2(t_1,t_2)$ has the unique minimum point $[t_1,t_2]^\top$ with
      \begin{align}\label{eq:h2t1}
      t_1&=  \frac{-w_2 a_{ij}+w_1 a_{jj}+w_1-x^{(k)}_j a_{ij}+x^{(k)}_i(a_{ij}^2-a_{ii}a_{jj}-a_{ii})}{a_{ij}^2-(a_{ii}-1)(a_{jj}+1)}, \\\label{eq:h2t2}
      t_2& =\frac{-w_1 a_{ij}+w_2 a_{ii}-w_2+x^{(k)}_i a_{ij}+x^{(k)}_j(a_{ij}^2-a_{ii}a_{jj}+a_{jj})}{a_{ij}^2-(a_{ii}-1)(a_{jj}+1)};
      \end{align}

  \item [(3)] If we extend the domain of $\tilde{h}_3(t_1,t_2)$ to $\mathbb{R}^2$, then $\tilde{h}_3(t_1,t_2)$ has the unique minimum point $[t_1,t_2]^\top$ with
      \begin{align}\label{eq:h3t1}
      t_1&= \frac{-w_2 a_{ij}+w_1 a_{jj}-w_1+x^{(k)}_j a_{ij}+x^{(k)}_i(a_{ij}^2-a_{ii}a_{jj}+a_{ii})}{a_{ij}^2-(a_{ii}+1)(a_{jj}-1)},\\\label{eq:h3t2}
      t_2& =\frac{-w_1 a_{ij}+w_2 a_{ii}+w_2-x^{(k)}_i a_{ij}+x^{(k)}_j(a_{ij}^2-a_{ii}a_{jj}-a_{jj})}{a_{ij}^2-(a_{ii}+1)(a_{jj}-1)};
      \end{align}

  \item [(4)] If we extend the domain of $\tilde{h}_4(t_1,t_2)$ to $\mathbb{R}^2$, then $\tilde{h}_4(t_1,t_2)$ has the unique minimum point $[t_1,t_2]^\top$ with
      \begin{align}\label{eq:h4t1}
      t_1&= \frac{-w_2 a_{ij}+w_1 a_{jj}+w_1-x^{(k)}_j a_{ij}+x^{(k)}_i(a_{ij}^2-a_{ii}a_{jj}-a_{ii})}{a_{ij}^2-(a_{ii}+1)(a_{jj}+1)},\\\label{eq:h4t2}
      t_2& =\frac{-w_1 a_{ij}+w_2 a_{ii}+w_2-x^{(k)}_i a_{ij}+x^{(k)}_j(a_{ij}^2-a_{ii}a_{jj}-a_{jj})}{a_{ij}^2-(a_{ii}+1)(a_{jj}+1)};
      \end{align}

\item[(5)] One of the minimum points of $\tilde{h}_i(t_1,t_2)$ $(i=1,2,3,4)$ is the minimum point of $h(t_1,t_2)$ in $\mathbb{R}^2$.
\end{itemize}

Then the solution $(\alpha^{(k)}, \beta^{(k)})$ can be determined as follows.

If $t_1$ and $t_2$ defined as in \eqref{eq:h1t1} and \eqref{eq:h1t2} satisfy $t_1\ge 0$ and $t_2\ge 0$, then the minimum point of $\tilde{h}_1(t_1,t_2)$ is the unique minimum point of $h(t_1,t_2)$, which implies that
\begin{align*}
\alpha^{(k)} &=t_1-x^{(k)}_i=\frac{-w_2a_{ij}+w_1a_{jj}-w_1+x^{(k)}_ja_{ij}-x^{(k)}_ia_{jj}+x^{(k)}_i}{a_{ij}^2
-(a_{ii}-1)(a_{jj}-1)},\\
\beta^{(k)} &=t_2-x^{(k)}_j=\frac{-w_1a_{ij}+w_2a_{ii}-w_2+x^{(k)}_ia_{ij}-x^{(k)}_ja_{ii}+x^{(k)}_j}{a_{ij}^2-
(a_{ii}-1)(a_{jj}-1)}.
\end{align*}

If $t_1$ and $t_2$ defined as in \eqref{eq:h2t1} and \eqref{eq:h2t2} satisfy $t_1\ge 0$ and $t_2 < 0$, then the minimum point of $\tilde{h}_2(t_1,t_2)$ is the unique minimum point of $h(t_1,t_2)$, which implies that
\begin{align*}
\alpha^{(k)} &=\frac{-w_2a_{ij}+w_1a_{jj}+w_1-x^{(k)}_ja_{ij}-x^{(k)}_ia_{jj}
-x^{(k)}_i}{a_{ij}^2-(a_{ii}-1)(a_{jj}+1)},\\
\beta^{(k)} &=\frac{-w_1a_{ij}+w_2a_{ii}-w_2+x^{(k)}_ia_{ij}
+x^{(k)}_ja_{ii}-x^{(k)}_j}{a_{ij}^2-(a_{ii}-1)(a_{jj}+1)}.
\end{align*}

If $t_1$ and $t_2$ defined as in \eqref{eq:h3t1} and \eqref{eq:h3t2} satisfy $t_1< 0$ and $t_2 \ge 0$, then the minimum point of $\tilde{h}_3(t_1,t_2)$ is the unique minimum point of $h(t_1,t_2)$, which implies that
\begin{align*}
\alpha^{(k)} &=\frac{-w_2a_{ij}+w_1a_{jj}-w_1+x^{(k)}_ja_{ij}
+x^{(k)}_ia_{jj}-x^{(k)}_i}{a_{ij}^2-(a_{ii}+1)(a_{jj}-1)},\\
\beta^{(k)} &=\frac{-w_1a_{ij}+w_2a_{ii}+w_2-x^{(k)}_ia_{ij}
-x^{(k)}_ja_{ii}-x^{(k)}_j}{a_{ij}^2-(a_{ii}+1)(a_{jj}-1)}.
\end{align*}

If $t_1$ and $t_2$ defined as in \eqref{eq:h4t1} and \eqref{eq:h4t2} satisfy $t_1< 0$ and $t_2 < 0$, then the minimum point of $\tilde{h}_4(t_1,t_2)$ is the unique minimum point of $h(t_1,t_2)$, which implies that
\begin{align*}
\alpha^{(k)} &=\frac{-w_2a_{ij}+w_1a_{jj}+w_1-x^{(k)}_ja_{ij}+x^{(k)}_ia_{jj}
+x^{(k)}_i}{a_{ij}^2-(a_{ii}+1)(a_{jj}+1)},\\
\beta^{(k)} &=\frac{-w_1a_{ij}+w_2a_{ii}+w_2-x^{(k)}_ia_{ij}
+x^{(k)}_ja_{ii}+x^{(k)}_j}{a_{ij}^2-(a_{ii}+1)(a_{jj}+1)}.
\end{align*}

According to Lemma \ref{lem:3}, the determined step sizes $\alpha^{(k)}$ and $\beta^{(k)}$ imply  $f(x^{(k+1)})<f(x^{(k)})$. Therefore, Algorithm \ref{alg:2} is called as the monotone block coordinate descent method.

Before ending this section, we will analyze the convergence of Algorithm \ref{alg:2}.

%
%

For the given $x^{(0)}\in\mathbb{R}^n$, define the level set
\begin{equation}\label{eq:level}
\mathcal{L}(x^{(0)})=\{x\in\mathbb{R}^n:f(x)\leq f(x^{(0)})\}.
\end{equation}

\begin{lem}\label{lem:5}
If $A-I\in \mathbb{R}^{n\times n}$ is a symmetric positive definite matrix and function $f$ is defined by \eqref{eq:f}, then the level set \eqref{eq:level} is a compact set.
\end{lem}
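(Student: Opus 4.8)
The plan is to invoke the Heine--Borel theorem and show that $\mathcal{L}(x^{(0)})$ is both closed and bounded in $\mathbb{R}^n$. Closedness is immediate: since $f$ is continuous on $\mathbb{R}^n$ (indeed continuously differentiable, with $\nabla f$ given by \eqref{eq:diff}), the set $\mathcal{L}(x^{(0)})$ is the preimage of the closed interval $(-\infty, f(x^{(0)})]$ under a continuous map, hence closed. So the whole content lies in establishing boundedness.

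For boundedness I would exploit the strong convexity from Lemma \ref{lem:2}. Set $\mu = 2\lambda_{\min}(A-I) > 0$; Lemma \ref{lem:2} gives the strong monotonicity estimate $\langle \nabla f(x) - \nabla f(y), x - y\rangle \ge \mu\|x-y\|^2$ for all $x,y \in \mathbb{R}^n$. From this I would derive the quadratic lower bound $f(x) \ge f(y) + \langle \nabla f(y), x-y\rangle + \tfrac{\mu}{2}\|x-y\|^2$ by writing $f(x)-f(y)=\int_0^1 \langle \nabla f(y+t(x-y)), x-y\rangle\,dt$ and bounding the integrand using strong monotonicity applied to the pair $y+t(x-y)$ and $y$. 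Letting $y = x_*$, the unique minimizer supplied by Theorem \ref{thm:1} (so that $\nabla f(x_*) = 0$), this becomes $f(x) \ge f(x_*) + \tfrac{\mu}{2}\|x-x_*\|^2$. Consequently every $x \in \mathcal{L}(x^{(0)})$ obeys $\tfrac{\mu}{2}\|x-x_*\|^2 \le f(x)-f(x_*) \le f(x^{(0)})-f(x_*)$, so $\mathcal{L}(x^{(0)})$ sits inside the closed ball of radius $\sqrt{2\bigl(f(x^{(0)})-f(x_*)\bigr)/\mu}$ centered at $x_*$ and is therefore bounded. Being closed and bounded, it is compact.

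The only step demanding care is the passage from strong monotonicity of $\nabla f$ to the quadratic growth inequality. Because $f$ is merely $C^1$ and not globally $C^2$ (this is precisely the failure exposed in Example \ref{exam:1}), I would deliberately avoid a second-order Taylor expansion and instead use the integral representation of $f(x)-f(y)$ along the connecting segment, which requires only $C^1$ regularity and the estimate from Lemma \ref{lem:2}. As an equivalent route to boundedness, the same inequality taken with $y = 0$ yields coercivity, namely $f(x) \ge f(0) - \|\nabla f(0)\|\,\|x\| + \tfrac{\mu}{2}\|x\|^2 \to +\infty$ as $\|x\| \to \infty$, and a coercive continuous function automatically has bounded sublevel sets; either formulation completes the argument.
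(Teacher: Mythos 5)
Your proof is correct, but it takes a genuinely different route from the paper's. The paper obtains closedness from continuity exactly as you do, but proves boundedness by contradiction via a normalization (recession-type) argument: assuming an unbounded sequence $\{z^{(k)}\}\subseteq\mathcal{L}(x^{(0)})$, it divides the inequality $f(z^{(k)})\le f(x^{(0)})$ by $\|z^{(k)}\|^2$, extracts an accumulation point $z^*$ of $\{z^{(k)}/\|z^{(k)}\|\}$, and passes to the limit to get $(z^*)^\top(A-\mathcal{D}(z^*))z^*\le 0$; since $A-\mathcal{D}(z^*)\succeq A-I\succ 0$, this forces $z^*=0$, contradicting $\|z^*\|=1$. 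That argument exploits only the degree-two homogeneity of the quadratic and absolute-value parts of $f$ and invokes neither Lemma \ref{lem:2} nor Theorem \ref{thm:1}. You instead reuse the strong monotonicity constant $\mu=2\lambda_{\min}(A-I)$ from Lemma \ref{lem:2}, integrate it along segments to obtain the quadratic growth bound $f(x)\ge f(x_*)+\tfrac{\mu}{2}\|x-x_*\|^2$ at the minimizer $x_*$ supplied by Theorem \ref{thm:1}, and place $\mathcal{L}(x^{(0)})$ inside an explicit ball; your care in using the integral representation of $f(x)-f(y)$ rather than a second-order Taylor expansion is exactly right, since $f$ is only $C^1$ (the very point of Example \ref{exam:1}). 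What your approach buys: a quantitative radius for the level set, no subsequence extraction, and, in the coercivity variant with $y=0$, independence even from Theorem \ref{thm:1} (hence from the interval-matrix uniqueness result it cites). What the paper's approach buys: it is self-contained with respect to the algebraic structure of $f$, does not presuppose the existence of a minimizer, and would still apply if one only knew positive definiteness of $A-\mathcal{D}(z)$ for every sign pattern rather than the strong convexity of $f$.
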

\begin{proof}
It is known continuously by the function $f$ that $\mathcal{L}(x^{(0)})$ is closed, the following proof shows that the level set $\mathcal{L}(x^{(0)})$ is bounded.

Assume, for contradiction, that $\{z^{(k)}\}\subseteq \mathcal{L}(x^{(0)})$ is an unbounded sequence. Then we have
$$
f(z^{(k)})\leq f(x^{(0)}),
$$
which implies
$$
(z^{(k)})^\top A z^{(k)}-(z^{(k)})^\top |z^{(k)}|-2(z^{(k)})^\top b\leq f(x^{(0)}).
$$
Assume $z^*$ is a accumulation point of the sequence $\{\frac{z^{(k)}}{\|z^{(k)}\|}\}$, then
$$
\frac{(z^{(k)})^\top A z^{(k)}}{\|z^{(k)}\|^2}-\frac{(z^{(k)})^\top |z^{(k)}|}{\|z^{(k)}\|^2}-\frac{2(z^{(k)})^\top b}{\|z^{(k)}\|^2} \leq \frac{f(x^{(0)})}{\|z^{(k)}\|^2}.
$$
Taking the limit as $k\rightarrow \infty$, we obtain
\begin{equation}\label{eq:z}
(z^*)^\top (A-\mathcal{D}(z^*))z^*\leq 0.
\end{equation}
Since $A-I$ is positive definite, $A-\mathcal{D}(z^*)$ is positive definite. Combining this with \eqref{eq:z}, we conclude $z^*=0$, which contradicts with $\|z^*\|=1.$

To sum up, the level set $\mathcal{L}(x^{(0)})$ is closed and bounded, so the level set $\mathcal{L}(x^{(0)})$ is compact.
\end{proof}

Then the following convergence result is obtained by Theorem~\ref{thm:1}, Lemma \ref{lem:5} and
\cite[Proposition 6]{grsc2000}.

\begin{thm}
Assume that $A - I$ is symmetric positive definite. Then the sequence $\{x^{(k)}\}$ generated by Algorithm \ref{alg:2} has a limit point which is the solutions of AVE~\eqref{eq:ave}.
\end{thm}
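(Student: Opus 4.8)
The plan is to recognize Algorithm~\ref{alg:2} as a cyclic block nonlinear Gauss--Seidel (block coordinate descent) scheme applied to the minimization of $f$, and to invoke the general convergence theory for such schemes from \cite[Proposition 6]{grsc2000}. That proposition guarantees that every limit point of the generated sequence is a stationary point of the objective, provided three hypotheses hold: the objective is continuously differentiable, its level set at the starting point is compact, and each block subproblem admits a \emph{unique} minimizer. The task is therefore to verify these three hypotheses and then to translate ``stationary point'' into ``solution of AVE~\eqref{eq:ave}''.

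First I would record the monotone descent property. Because the update~\eqref{eq:alpha} is an exact minimization of $f$ over the active block while the remaining coordinates are frozen, we have $f(x^{(k+1)})\le f(x^{(k)})$ for every $k$; consequently the whole sequence remains in the level set $\mathcal{L}(x^{(0)})$ defined in~\eqref{eq:level}. By Lemma~\ref{lem:5} this level set is compact, so $\{x^{(k)}\}$ is bounded and hence possesses at least one limit point. This disposes of the compactness hypothesis and simultaneously secures the existence of a limit point asserted in the statement.

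Next I would check the remaining two hypotheses. Continuous differentiability of $f$ is already recorded through~\eqref{eq:diff}. The uniqueness of the block minimizer is the delicate point and is exactly what the preceding analysis was designed to supply: since $A-I$ is symmetric positive definite, the $2\times 2$ principal submatrix $\begin{bmatrix} a_{ii}-1 & a_{ij}\\ a_{ij} & a_{jj}-1\end{bmatrix}$ associated with the block $\{i,j\}$ active at step $k$ is itself symmetric positive definite, so the restriction of $f$ to that two-coordinate block is strongly convex and, by Lemma~\ref{lem:3}, attains a unique minimizer, given explicitly by the formulas for $(\alpha^{(k)},\beta^{(k)})$. This verifies the uniqueness requirement of \cite[Proposition 6]{grsc2000}, and it is, I expect, the main obstacle: the convergence of cyclic block Gauss--Seidel with more than two blocks genuinely fails without such a uniqueness guarantee, which is why the explicit per-block minimization and Lemma~\ref{lem:3} are indispensable rather than decorative.

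With all three hypotheses in place, \cite[Proposition 6]{grsc2000} yields that every limit point $\bar{x}$ of $\{x^{(k)}\}$ is a stationary point of $f$, that is, $\nabla f(\bar{x})=0$. Finally I would invoke convexity: by Lemma~\ref{lem:2} the function $f$ is strongly convex, so any stationary point is the global minimizer, and Theorem~\ref{thm:1} identifies this unique minimizer with the unique solution $x_*$ of AVE~\eqref{eq:ave}. Hence the limit point equals $x_*$ and solves~\eqref{eq:ave}, which completes the argument; I would also remark that strong convexity in fact forces a single limit point, so the entire sequence converges, although the statement only claims the existence of one.
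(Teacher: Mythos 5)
Your proposal is correct and follows essentially the same route as the paper, whose entire proof consists of citing Theorem~\ref{thm:1}, Lemma~\ref{lem:5}, and \cite[Proposition 6]{grsc2000}; you have simply made explicit the hypothesis-checking (smoothness via~\eqref{eq:diff}, compactness of $\mathcal{L}(x^{(0)})$, and uniqueness of the block minimizers via the positive definiteness of the $2\times 2$ principal submatrices and Lemma~\ref{lem:3}) that the paper leaves implicit. Your closing observation that strong convexity upgrades the conclusion to convergence of the whole sequence is a valid strengthening beyond what the theorem claims.
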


\section{Numerical results}\label{sec:Num}
In this section, three numerical examples are used to illustrate the effectiveness of Algorithm 2. In both examples, the performance of Algorithm 2 (denoted by BCDA) and Algorithm 1 (denoted by MGSM) are compared. In the numerical results, we will report ``IT'' (the number of iterations. For Algorithm \ref{alg:1}, IT $=k\times n$.), ``Time'' (the elapsed CPU time in seconds) and ``RES'' which is defined by
$$
{\rm RES} =\frac{\|b+|x^{(k)}|-Ax^{(k)}\|}{\|b\|}.
$$
All tests are started from initial zero vector (except Example~\ref{exam:4.3}) and terminated if the current iteration satisfies ${\rm RES} \le 10^{-6}$. In our computations, all runs are implemented in MATLAB (version 9.10 (R2021a)) on a personal computer with IntelCore(TM) i7 CPU 2.60 GHz, 16.0GB memory.

In order to visualize the performance of the two algorithms, a two-dimensional problem is first considered.

\begin{exam}\label{exam:4.1}
Consider AVE \eqref{eq:ave}, where
\begin{equation*}
A=\begin{bmatrix}
\frac{3}{2}&\frac{1}{4}\\
\frac{1}{4}&\frac{3}{2}
\end{bmatrix}
,~b=[\frac{1}{4},1]^\top\in\mathbb{R}^2.
\end{equation*}

The numerical results are shown in Table 2, and the iterative trajectories of the algorithms are  shown in Figure~\ref{fig1}, which clearly shows that BCDA is monotonic while MGSM is not.

\begin{table}[H]
\centering
\caption{Numerical results for Examples~\ref{exam:4.1}.}\label{table2}
\begin{tabular}{lllllllll}\hline
&$n=2$     &BCDA    &MGSM         \\\hline
&IT      &1       &2            \\
&Time    &0.003460       &0.010728  \\
&RES     &5.3854e-17  &4.8168e-16   \\\hline
\end{tabular}
\end{table}

\begin{figure}[h]
  \centering
  \includegraphics[width=0.7\linewidth]{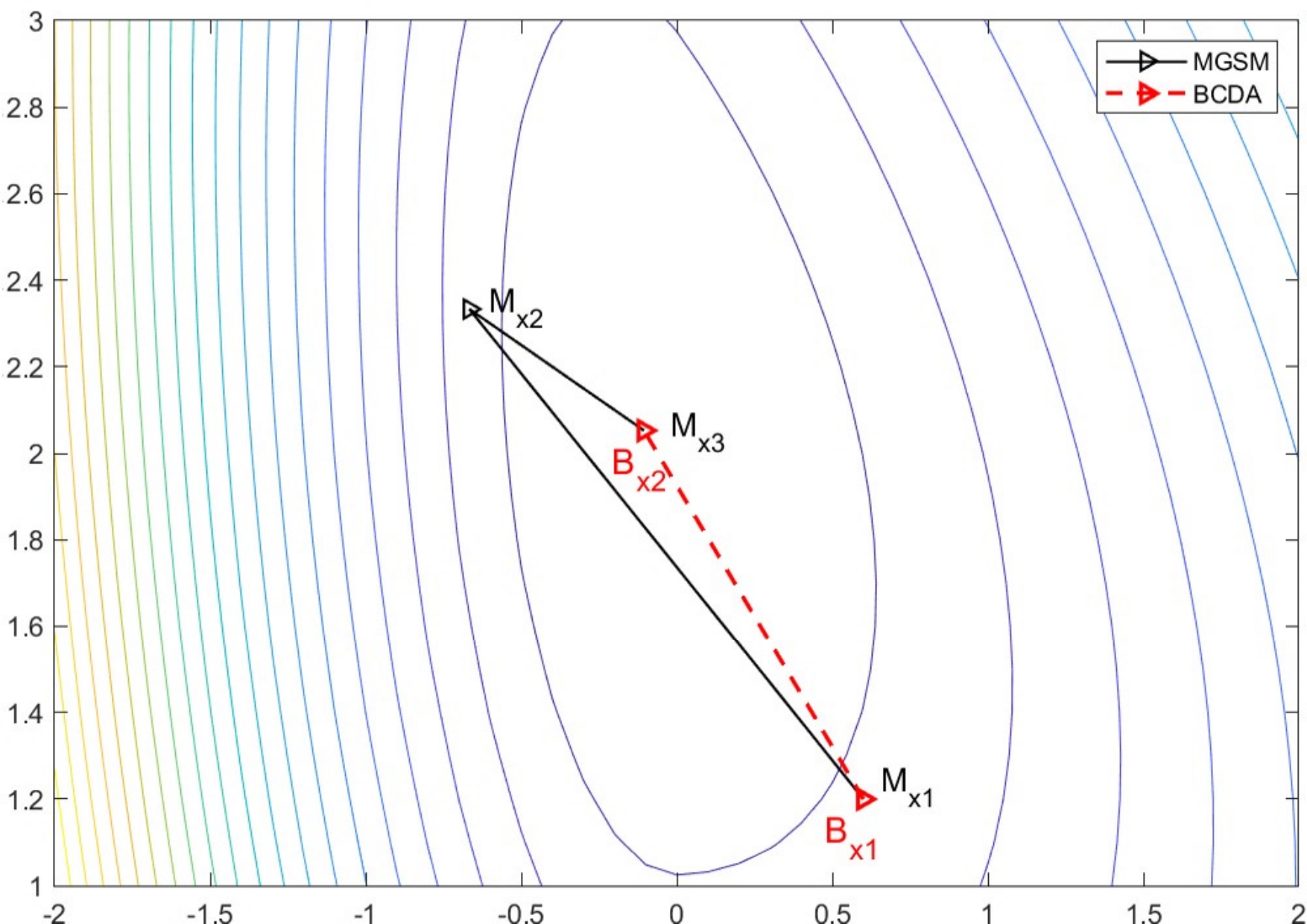}
  \caption{Iterative trajectory of the algorithm.}\label{fig1}
\end{figure}

\end{exam}

Next, a high-dimensional example is used to demonstrate the performance of the proposed algorithm.

\begin{exam}\label{exam:4.2}
Consider AVE \eqref{eq:ave}, where
\begin{eqnarray*}
A=tridiag(\frac{3}{4},4,\frac{3}{4})=
\left(\begin{array}{cccccc}
4&\frac{3}{4}&0&\cdots&0&0\\
\frac{3}{4}&4&\frac{3}{4}&\cdots&0&0\\
0&\frac{3}{4}&4&\cdots&0&0\\
\vdots&\vdots&\vdots&\vdots&\vdots&\vdots\\
0&0&0&\cdots&4&\frac{3}{4}\\
0&0&0&\cdots&\frac{3}{4}&4\\
\end{array}\right)\in\mathbb{R}^{n\times n}
\end{eqnarray*}
and ~$b=[\frac{1}{2},1,\frac{1}{2},1,\ldots,\frac{1}{2},1]^\top\in\mathbb{R}^n$. The numerical results are shown in Table 3. For this example, Table 3 shows that BCDA outperforms MGSM in both the number of iteration steps and the iteration time.

\begin{table}[H]
\centering
\caption{Numerical results for Examples~\ref{exam:4.2}.}\label{table3}
\begin{tabular}{lllllllll}\hline
&Method     &$n$    &1000         &1500        &2000         &2500        &3000         \\\hline
&BCDA     &IT     &2000            &2250           &3000            &3750           &4500     \\
&{}       &Time   &0.0187       &0.0231      &0.0589      &0.1357      &0.2225           \\
&{}       &RES    &9.1891e-08   &8.8970e-07  &7.7050e-07   &6.8916e-07  &6.2911e-07  \\
&MGSM     &IT     &6000            &9000           &12000            &15000           &18000     \\
&{}       &Time   &0.0377       &0.0561      &0.1796       &0.4969      &0.8426           \\
&{}       &RES    &3.0374e-07   &3.0432e-07  &3.0461e-07   &3.0478e-07  &3.0490e-07   \\\hline
\end{tabular}
\end{table}
\end{exam}

\begin{exam}\label{exam:4.3}
Consider AVE \eqref{eq:ave}, where
$$
A=\begin{bmatrix}
    1 & 0.25  \\
    0.25 & 1
  \end{bmatrix},
b=\begin{bmatrix}
    1 \\
    0.5
  \end{bmatrix}.
$$
In this case, $A-I$ is not symmetric positive definite. It can be verified that this AVE has a solution $x^*=[2,4]^\top$. For both algorithms  MGSM and BCDA, the initial point $x^{(0)}=[0.1,-1]^\top$ is selected. The relative residual curves of the algorithms are shown in Figure \ref{fig2}. In this case, the algorithm  MGSM does not converge. In fact, for the algorithm  MGSM there is
\begin{equation*}
x^{(1)}=\begin{bmatrix}
          0.1 \\
          -1
        \end{bmatrix},~
x^{(2)}=\begin{bmatrix}
          -30 \\
          4
        \end{bmatrix},~
x^{(3)}=\begin{bmatrix}
          2 \\
          -12
        \end{bmatrix},~
x^{(4)}=\begin{bmatrix}
          -30 \\
          4
        \end{bmatrix},\dots,
\end{equation*}
that is, the algorithm MGSM iterates between points $[-30,4]^\top$ and $[2,-12]^\top$ starting at $x^{(2)}$. However, the algorithm BCDA can converge to the solution $x^*$.
\end{exam}

\begin{figure}[H]
  \centering
  \includegraphics[width=0.7\linewidth]{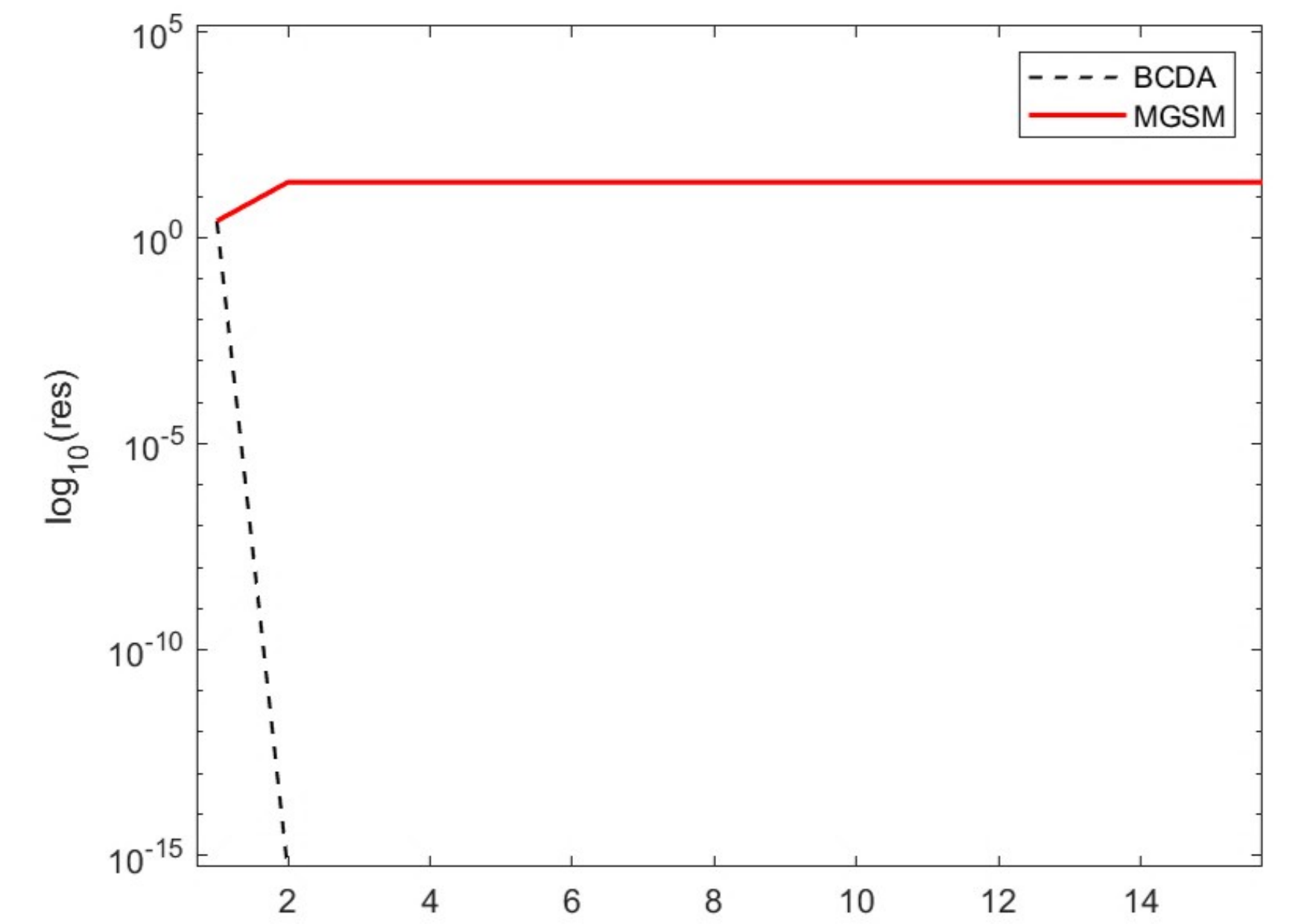}
  \caption{Plot of relative residuals for Example \ref{exam:4.3}.}\label{fig2}
\end{figure}

\section{Conclusion}\label{sec:conclusion}
A monotone block coordinate descent method is proposed to solve AVE \eqref{eq:ave}, which can be  superior to the method presented in \cite{nika2011}. A future study direction is to develop a nonmonotone block coordinate descent method with convergence guarantee to solve AVE \eqref{eq:ave}.

\end{document}